\documentclass[11pt]{article}

\usepackage{amsmath,amssymb,amsthm,mathtools}
\usepackage{enumitem}
\usepackage[margin=1in]{geometry}
\usepackage{hyperref}
\usepackage[nameinlink,noabbrev]{cleveref}

\newtheorem{theorem}{Theorem}[section]
\newtheorem{proposition}[theorem]{Proposition}
\newtheorem{lemma}[theorem]{Lemma}
\newtheorem{corollary}[theorem]{Corollary}

\theoremstyle{definition}

\newtheorem{problem}[theorem]{Open Problem}
\newtheorem{example}[theorem]{Example}

\theoremstyle{remark}
\newtheorem{remark}[theorem]{Remark}

\theoremstyle{plain}
\newtheorem{claim}{Claim}

\crefname{problem}{Open Problem}{Open Problems}
\Crefname{problem}{Open Problem}{Open Problems}

\newcommand{\K}{\mathbb K}
\newcommand{\N}{\mathbb N}
\newcommand{\Z}{\mathbb Z}

\newcommand{\id}{\operatorname{id}}

\newcommand{\cl}[1]{\overline{#1}}

\title{A characterization of the reversibility of linear cellular automata}
\author{Alonso Castillo-Ramirez\footnote{Email: alonso.castillor@academicos.udg.mx}\\[2pt]
\small Departamento de Matemáticas, Centro Universitario de Ciencias Exactas e Ingenier\'ias,\\ 
\small Universidad de Guadalajara, Guadalajara, M\'exico.}
\date{}

\begin{document}

\maketitle

\begin{abstract}
Let \(G\) be a group, let \(\K\) be a field, and let \(V\) be a \(\K\)-vector space. We prove that there exists a bijective linear cellular automaton
\(V^G\to V^G\) whose inverse is not a cellular automaton if and only if \(G\) is not locally finite and \(\dim_{\K}V\geq|\K|^{\aleph_0}\). This answers an open problem proposed by T. Ceccherini-Silberstein and M. Coornaert.
\end{abstract}

\medskip
\noindent\textbf{2020 Mathematics Subject Classification.}
37B15, 68Q80, 15A03, 20F50.

\smallskip
\noindent\textbf{Keywords.}
Linear cellular automaton, reversibility, locally finite group, linearly topologized vector space.


\section{Introduction}

Let \(G\) be a group and \(A\) a set.  A cellular automaton over \(G\) with alphabet \(A\) is a map \(\tau\colon A^G\to A^G\) defined by a local rule $\mu\colon A^S\to A$ with finite memory $S\subseteq G$.  The cellular automaton is \emph{reversible} if it is bijective and its inverse is again a cellular automaton.  For finite alphabets, bijectivity implies reversibility by compactness and the Curtis--Hedlund-Lyndon theorem; see
\cite[Chapter~1]{CSCAG2} and the classical paper \cite{Hedlund1969}. It is also known that if $G$ is locally finite, then bijectivity implies reversibility for arbitrary alphabets \cite[Proposition~6.1]{CSC2011}.

Suppose now that \(A=V\) is a vector space over a field \(\K\) and that the local defining map $\mu\colon V^S\to V$ is linear. If $G$ is any group and \(V\) is finite-dimensional, then every bijective linear cellular automaton $\tau\colon V^G\to V^G$ is reversible \cite[Theorem~1.1]{CSC2011}. For infinite-dimensional vector spaces, the following problem was proposed by T. Ceccherini-Silberstein and M. Coornaert; see \cite{CSC2011} and \cite[Open Problem~3]{CSCAG2}.

\begin{problem}\label{prob:op3}
Let \(G\) be a periodic group that is not locally finite, and let \(V\) be an infinite-dimensional vector space over a field \(\K\).  Does there exist a bijective linear cellular automaton \(\tau\colon V^G\to V^G\) that is not reversible?
\end{problem}

J. Yang recently settled the adjacent questions for nonlinear cellular automata. In \cite[Theorem~1.1]{YangReversibility2026}, they proved that a group \(G\) is locally finite if and only if every bijective cellular automaton
over every alphabet is reversible. In \cite[Theorems~1.3 and~1.5]{YangClosedImage2026}, they also characterized the local finiteness of $G$ by the closed-image property both for arbitrary cellular automata over infinite alphabets and for linear cellular automata over infinite-dimensional vector-space alphabets. These results answer Open Problems~2, 6 and~7 of \cite{CSCAG2}.

As the main theorem of this paper shows, the answer to the reversibility problem for linear cellular automata depends on both the group $G$ and the dimension of the vector space $V$. 

\begin{theorem}\label{thm:main}
Let \(G\) be a group, let \(\K\) be a field, and let \(V\) be a \(\K\)-vector space.  The following are equivalent:
\begin{enumerate}[label=\textup{(\roman*)}]
  \item There exists a bijective linear cellular automaton \(\tau\colon V^G\to V^G\) that is not reversible;
  \item \(G\) is not locally finite and \(\dim_{\K}V\geq \dim_{\K}(\K^{\N}) = |\K|^{\aleph_0}\).
\end{enumerate}
\end{theorem}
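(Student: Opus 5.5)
The forward direction (i)$\Rightarrow$(ii) I would prove by contraposition, and the reverse direction by an explicit construction. If $G$ is locally finite, \cite[Proposition~6.1]{CSC2011} already says every bijective cellular automaton over $G$ is reversible. So assume $\dim_{\K}V<|\K|^{\aleph_0}$ and let $\tau\colon V^G\to V^G$ be a bijective linear cellular automaton with memory $S$. Its inverse $\sigma$ is linear and $G$-equivariant. It is a cellular automaton iff the linear map $\varphi\colon x\mapsto\sigma(x)(1_G)$, $V^G\to V$, depends only on the coordinates in some finite set.

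Suppose it does not. First I would use that $\tau$ is continuous for the prodiscrete topology and bijective, so $\sigma$ sends convergent sums $\sum_n x_n$ of configurations with pairwise far-apart supports to $\sum_n\sigma(x_n)$ coordinatewise. Then I would choose inductively elements $g_n\in G$ tending to infinity and vectors $v_n\in V$ with the following property: the vectors $w_n=\varphi(v_n\delta_{g_n})$ are nonzero and "triangular". By triangular I mean that each $w_n$ lies outside the span of the earlier contributions in a way that survives infinite combinations; a diagonal choice of $g_n$ far from previously used finite windows makes the earlier contributions stabilise. The map $\K^{\N}\to V$, $c\mapsto\varphi\bigl(\sum_n c_nv_n\delta_{g_n}\bigr)$, is then well defined and linear. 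I expect to arrange it to be injective, which gives $\dim V\ge\dim\K^{\N}=|\K|^{\aleph_0}$ by Erdős–Kaplansky, a contradiction.

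This step is where I expect the main obstacle. The point is to guarantee injectivity on \emph{all} of $\K^{\N}$, not just on finitely supported sequences. If the direct diagonalisation is too delicate, my fallback is to compose with a suitable linear map $V\to\K^{\N}$ built from functionals separating the $w_n$ in triangular fashion: I would choose $f_n\in V^*$ with $f_n(w_n)\neq0$ and $f_n(w_m)=0$ for $m>n$, using the freedom in the choice of the later $g_m$.

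For (ii)$\Rightarrow$(i), the plan is as follows.
\begin{itemize}
\item \emph{Reduce to a finitely generated subgroup.} Since $G$ is not locally finite, it contains an infinite finitely generated subgroup $H=\langle S\rangle$ with $S=S^{-1}$ finite. Inducing a cellular automaton from $H$ to $G$ (acting independently on cosets) preserves bijectivity and non-reversibility, so it suffices to work over $H$.
\item \emph{Reduce to the model space.} Write $V=W\oplus U$ with $W\cong\K^{M}$, where $M$ is the free monoid on $S$ (countable, so $\dim W=|\K|^{\aleph_0}$). Taking the identity on $U^H$, it suffices to build the example on $W$.
\item \emph{Define the automaton.} For $s\in S$ let $T_s\colon\K^{M}\to\K^{M}$ be given by $(T_sy)_{sw}=y_w$ and $(T_sy)_u=0$ if $u$ does not begin with $s$. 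Put
\[\tau(x)(h)=x(h)-\sum_{s\in S}T_s\,x(hs),\]
a linear cellular automaton with memory $S\cup\{1\}$.
\item \emph{Invert it.} Each $T_s$ raises word length by one, so the formal inverse
\[\sigma(x)(h)=\sum_{w=s_1\cdots s_k\in M}T_{s_1}\cdots T_{s_k}\,x(hs_1\cdots s_k)\]
has only finitely many nonzero terms in each coordinate $u\in M$, namely those $w$ that are prefixes of $u$. Hence $\sigma$ is well defined, and the telescoping identities $\sigma\tau=\tau\sigma=\id$ can be checked coordinatewise.
\item \emph{Show $\sigma$ is not a cellular automaton.} The coordinate $u$ of $\sigma(x)(1_H)$ involves $x(\bar u)_{\emptyset}$, where $\bar u$ is the image of $u$ in $H$, with coefficient $1$, and distinct words give distinct coordinates, so there is no cancellation. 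Since $H$ is infinite, the images $\bar u$ are unbounded, so $\sigma(x)(1_H)$ depends on infinitely many cells. Thus $\tau$ is bijective but not reversible.
\end{itemize}
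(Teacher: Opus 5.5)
Your direction (ii)$\Rightarrow$(i) is correct, and it is a nice variant of the paper's. You index the alphabet by the free monoid $M$ on $S$ (all paths in the Cayley graph) instead of using a single ray $p_n=s_1\cdots s_n$, which removes the need for K\"onig's lemma. Coordinate $u$ of $\sigma(x)(h)$ only involves prefixes of $u$, so $\sigma$ is well defined and the telescoping identities hold. Non-reversibility follows because every element of the infinite group $H$ is some $\bar u$.

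The genuine gap is in the dimension case of (i)$\Rightarrow$(ii). Your first step claims that continuity and bijectivity of $\tau$ make $\sigma$ send a convergent sum $\sum_n x_n$ with far-apart supports to the coordinatewise sum $\sum_n\sigma(x_n)$. This is false. Continuity of $\tau$ only gives that the graph of $\sigma$ is closed, i.e.\ \emph{if} $\sum_n\sigma(x_n)$ converges, then its limit is $\sigma(\sum_n x_n)$. Convergence itself is precisely the continuity of $\sigma$ you are trying to prove. Your own automaton refutes the claim. Let $x_n$ place the indicator of the empty word at $\bar u_n$, for words $u_n$ with pairwise distinct images in $H$. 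Then $\sum_n x_n$ converges, but $\sigma(x_n)(1_H)$ has coordinate $u_n$ equal to $1$, so $\sum_n\sigma(x_n)(1_H)$ has infinitely many nonzero terms in the discrete alphabet. More directly, your setup requires $w_n\neq0$ for all $n$. Applying the claim with $c=(1,1,\dots)$ would then express $\varphi(\sum_n v_n\delta_{g_n})$ as a finite sum of the $w_n$, an immediate contradiction using no dimension hypothesis. That would prove every bijective linear cellular automaton is reversible.

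Without that step, injectivity of $c\mapsto\varphi(\sum_n c_nv_n\delta_{g_n})$ on all of $\K^{\N}$, which you flag as the main obstacle, has no support. Since $\varphi$ is discontinuous, its value on an infinitely supported combination is not determined by the $w_n$. So neither triangularity of the $w_n$ nor functionals $f_n$ separating them says anything about it. The target statement is plausible (in the paper's example, $\varphi(\sum_{n\geq1} c_n e_0\delta_{q_n})=\sum_{n\geq1} c_ne_n$ is injective), but the mechanism is missing.

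What is needed is a completeness argument run on the side where continuity is available. The paper restricts to the countable subgroup $H$ generated by a memory set, so that $X=V^H$ is complete metrizable. For each finite $F$, the quotient $X/\cl{\tau(U_F)}$ is then a complete metrizable linearly topologized space of dimension at most $|F|\dim_{\K}V<|\K|^{\aleph_0}$. Hence it is discrete by \Cref{lem:KN-embedding}; that lemma is where an infinite-sum injection of $\K^{\N}$ legitimately works, because the target is complete and the subspaces $E_n$ are closed. So $\tau$ is almost open. The successive-approximation argument in \Cref{prop:open-mapping}, again using completeness of $V^H$, upgrades this to openness. This makes $\tau^{-1}$ continuous and hence a cellular automaton.
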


In particular, this result shows that infinite-dimensionality of $V$ alone is not enough to decide the reversibility problem for linear cellular automata, and it gives a precise answer to \Cref{prob:op3}.   

The structure of the paper is as follows. In Section~2 we recall some basic tools from the theory of cellular automata over groups. In Section~3 we prove the implication \textup{(i)}$\Rightarrow$\textup{(ii)} of \Cref{thm:main} using topological arguments on linearly topologized $\K$-vector spaces. We also identify a gap in the proof of \cite[Theorem~1.2]{CSC2011}, which explains why \Cref{thm:main} does not contradict the earlier literature. Finally, in Section~4 we prove the converse of \Cref{thm:main} implication by extending the linear construction introduced by Yang in \cite{YangClosedImage2026}.


\section{Cellular automata over groups}

Throughout, $\N=\{0,1,2,\ldots\}$. In this section we briefly recall
some basic facts about nonlinear and linear cellular automata over groups,
following \cite{CSCAG2}. For any group $G$ and any set $A$, the
\emph{shift action} of \(G\) on \(A^G\) is given by
\[
  (g x)(h)=x(g^{-1}h), \quad \forall g,h\in G,\ x\in A^G.
\]
A map \(\tau\colon A^G\to A^G\) is a \emph{cellular automaton} if there exist a finite set \(M\subseteq G\), called a \emph{memory set}, and a map \(\mu\colon A^M\to A\) such that
\begin{equation}\label{eq:ca-local-rule}
  \tau(x)(g)=\mu\bigl((g^{-1}x)|_M\bigr), \quad \forall x\in A^G,\ g\in G. 
\end{equation}

By the generalized Curtis--Hedlund--Lyndon theorem \cite[Theorem~1.9.1]{CSCAG2}, a map $\tau\colon A^G\to A^G$ is a cellular automaton if and only if $\tau$ is $G$-equivariant with respect to the shift action and uniformly continuous with respect to the prodiscrete uniform structure on $A^G$.

If \(A=V\) is a vector space over a field \(\K\), a cellular automaton \(\tau\colon V^G\to V^G\) is \emph{linear} when \(\tau\) is \(\K\)-linear, or equivalently when it has a linear local defining map. In this case, the linear Curtis--Hedlund--Lyndon theorem \cite[Theorem~8.1.2]{CSCAG2} establishes that a $\K$-linear map \(\tau\colon V^G\to V^G\) is a cellular automaton if and only if $\tau$ is $G$-equivariant and continuous with respect to the \emph{prodiscrete topology} on $V^G$, namely, the product topology obtained from the discrete topology on $V$.

The prodiscrete topology on $V^G$ is Hausdorff and totally disconnected, and it is compact if and only if $V$ is finite. It is metrizable whenever $G$ is countable; provided that $V$ has more than one element, the converse also holds \cite[Chapter~1]{CSCAG2}.

If a cellular automaton $\tau\colon A^G\to A^G$ admits a memory set $M$ contained in a subgroup $H$ of $G$, we may consider its \emph{restriction} $\tau_H\colon A^H\to A^H$; see \cite[Section~1.7]{CSCAG2} and \cite{CSCInduction2009}. The map $\tau$ is injective, surjective, or bijective if and only if $\tau_H$ has the corresponding property \cite[Proposition~1.7.4]{CSCAG2}. Furthermore, $\tau$ is reversible if and only if $\tau_H$ is reversible \cite[Proposition~1.10.4]{CSCAG2}.


\section{The direct implication of \texorpdfstring{\Cref{thm:main}}{the main theorem}}

For this section, let $V$ be a topological vector space over a discrete field $\K$. Translations and multiplication by nonzero scalars are homeomorphisms of $V$, so neighborhoods of arbitrary points are translates of neighborhoods of $0$. We say that $V$ is \emph{linearly topologized} if it has a neighborhood basis at $0$ consisting of vector subspaces \cite{Positselski2024}. Every open subspace $U$ of $V$ is also closed, since
\(V\setminus U=\bigcup_{x\in V\setminus U}(x+U)\), where each coset is open.

\begin{example}\label{example}
Let $G$ be any group and let $V$ be a discrete vector space. Then $V^G$, with the prodiscrete topology, is linearly topologized: a neighborhood basis at $0$ consisting of subspaces is given by
\begin{equation}\label{eq:UF}
  U_F:=\{x\in V^G : x|_F=0\},
  \qquad F\subseteq G\text{ finite}.
\end{equation}
If $G$ is countably infinite, fix an enumeration
$G=\{g_0,g_1,\ldots\}$ and set
\[
 d(x,y)=
 \begin{cases}
  0,&x=y,\\
  2^{-m},&m=\min\{j:x(g_j)\neq y(g_j)\}.
 \end{cases}
\]
This complete metric induces the prodiscrete topology; see also
\cite[Exercise~1.2]{ExercisesCAG}. Indeed, a Cauchy sequence eventually
stabilizes at each coordinate, and its coordinatewise limit is its limit for
$d$. When $G$ is finite, the prodiscrete topology is discrete and is again
complete and metrizable. Thus $V^G$ is complete metrizable whenever $G$ is
countable.
\end{example}

In contrast with the previous example, for $n\geq1$ the vector space
$\mathbb{R}^n$ with its usual topology is a topological vector space, but
it is not linearly topologized.

\begin{lemma}\label{lem:KN-embedding}
Let \(X\) be a non-discrete complete metrizable linearly topologized \(\K\)-vector space. Then there exists an injective
\(\K\)-linear map
\[
  \K^{\N}\longrightarrow X.
\]
Consequently,
\[   \dim_{\K}(X) \geq \dim_\K(\K^\N) = |\K|^{\aleph_0}. \]
\end{lemma}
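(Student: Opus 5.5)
Since $X$ is metrizable and linearly topologized, I will first pick a countable neighborhood basis at $0$ and replace it by a decreasing chain of open subspaces
\[
X=U_0\supseteq U_1\supseteq U_2\supseteq\cdots,
\]
with $U_n$ open and $\{U_n\}$ a basis at $0$. This is possible because, starting from a countable basis $B_n$, I can choose subspaces $W_n\subseteq B_n$ and set $U_n=W_0\cap\cdots\cap W_n$; a finite intersection of open subspaces is still an open subspace. Hausdorffness gives $\bigcap_n U_n=\{0\}$. Non-discreteness means that no $U_n$ equals $\{0\}$. Moreover, the chain cannot stabilize: if $U_n=U_{n+1}=\cdots$, then $U_n=\bigcap_m U_m=\{0\}$, a contradiction. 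Passing to a subsequence, I may therefore assume all inclusions are strict, and I choose $e_n\in U_n\setminus U_{n+1}$.

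Next I define the embedding. For $a=(a_n)\in\K^{\N}$, set $\Phi(a)=\sum_n a_n e_n$. This series converges: its partial sums $s_N$ satisfy $s_M-s_N\in U_{N+1}$ for $M>N$, because each $U_k$ is a subspace containing all $e_m$ with $m\ge k$. So $(s_N)$ is Cauchy for any translation-invariant metric compatible with the topology. I would take such a metric as in \Cref{example}, or phrase "complete" uniformly via the basis $U_n$, and use completeness to get the limit. Linearity of $\Phi$ follows from linearity of limits in a topological vector space, using that the field is discrete so scalar multiplication is continuous.

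For injectivity, suppose $a\neq 0$ and let $k$ be the least index with $a_k\neq0$. Then
\[
\Phi(a)=a_ke_k+r, \qquad r=\lim_N\sum_{n=k+1}^N a_ne_n .
\]
Every partial sum defining $r$ lies in $U_{k+1}$, and $U_{k+1}$ is closed (an open subspace is closed, as noted above), so $r\in U_{k+1}$. Since $a_ke_k\notin U_{k+1}$, it follows that $\Phi(a)\notin U_{k+1}$, and in particular $\Phi(a)\neq0$. The dimension inequality is then immediate, with the standard Erdős–Kaplansky fact $\dim_\K \K^{\N}=|\K|^{\aleph_0}$ quoted as already stated in \Cref{thm:main}.

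I expect the main obstacle to be the step where completeness is used. The statement says "complete metrizable", and I must make sure that Cauchy-ness with respect to the subspace basis matches Cauchy-ness for the metric. I would handle this by noting that the topology's uniformity is translation-invariant: a sequence that is eventually in $x_N+U_n$ for each $n$ is Cauchy in any translation-invariant compatible metric, and such a metric can be chosen by Birkhoff–Kakutani or explicitly as $d(x,y)=2^{-\max\{n: x-y\in U_n\}}$. I would interpret completeness with respect to this canonical uniform structure.
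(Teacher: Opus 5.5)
Your proposal is correct and follows essentially the same route as the paper. Both arguments use a strictly decreasing countable basis of open (hence closed) subspaces, vectors $e_n\in U_n\setminus U_{n+1}$, the map $a\mapsto\sum_n a_ne_n$ defined via completeness, and injectivity from the first nonzero coefficient together with closedness of $U_{k+1}$. Your remark about reading completeness in the translation-invariant uniform sense matches how the paper uses it, so nothing is missing.
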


\begin{proof}
Since $X$ is metrizable and linearly topologized, it has a countable neighborhood basis at $0$ consisting of subspaces (see \cite[Theorem~2.7.3]{NariciBeckenstein2011}). By taking finite
intersections, we obtain a decreasing basis
\[   E_0\supseteq E_1\supseteq E_2\supseteq\cdots \]
of clopen subspaces. Since $X$ is Hausdorff,
\[   \bigcap_{n\geq 0}E_n=\{0\}. \]
Indeed, for any $x \in X$, $x\neq 0$, there is a neighborhood \(U\) of \(0\) such that \(x\notin U\), and there exists $k \geq 0$ such that $E_k\subseteq U$. Therefore, $x\notin E_k$, so $x \notin \bigcap_{n\geq 0}E_n$. If the sequence contained only finitely many distinct subspaces, then \(\{0\}\) would be open, contrary to the assumption that \(X\) is non-discrete. By restricting to a subsequence and relabelling if necessary, we may assume that
\[  E_0\supsetneq E_1\supsetneq E_2\supsetneq\cdots. \]
Choose $e_n\in E_n\setminus E_{n+1}$ for every $n\geq0$.

Let \(a=(a_n)_{n\geq0}\in\K^{\N}\), and put
\[  s_m(a):=\sum_{n=0}^{m}a_ne_n.\]
The sequence \((s_m(a))_{m\geq0}\) is Cauchy. Indeed, given a
neighborhood \(U\) of \(0\), choose \(r\) such that \(E_r\subseteq U\).
If \(m>n\geq r\), then
\[
  s_m(a)-s_n(a)
  =\sum_{j=n+1}^{m}a_je_j
  \in E_{n+1}\subseteq E_r\subseteq U.
\]
Since $X$ is complete, the sequence \((s_m(a))\) converges.  We may
therefore define
\[
  \Phi\colon\K^{\N}\longrightarrow X,
  \qquad
  \Phi(a):=\lim_{m\to\infty}s_m(a)
          =\sum_{n\geq0}a_ne_n.
\]
Continuity of addition and scalar multiplication shows that \(\Phi\) is \(\K\)-linear.

It remains to prove injectivity. Let \(a\neq0\), and let \(r\) be the least index such that \(a_r\neq0\). Every partial sum of $\sum_{n>r}a_ne_n$ belongs to \(E_{r+1}\), and hence its limit belongs to \(E_{r+1}\), since $E_{r+1}$ is closed.
On the other hand,
\[
  a_re_r\notin E_{r+1}.
\]
It follows that
\[
  \Phi(a)
  =a_re_r+\sum_{n>r}a_ne_n
  \neq0.
\]
Thus \(\Phi\) is injective.
\end{proof}

Following \cite[Definition~14.4.1]{NariciBeckenstein2011}, a linear map
\(A\colon X\to Y\) between topological vector spaces is called
\emph{almost open}, or \emph{nearly open}, if $\cl{A(U)}$ is a neighborhood of \(0\) in \(Y\) for every neighborhood \(U\) of \(0\) in \(X\).

\begin{proposition}\label{prop:open-mapping}
Let \(H\) be countable, let \(V\) be a \(\K\)-vector space with
\[
  \kappa:=\dim_{\K}V < |\K|^{\aleph_0},
\]
and put \(X=V^H\).  Every continuous surjective \(\K\)-linear map
\(T\colon X\to X\) is open.
\end{proposition}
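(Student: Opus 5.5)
The plan is to show that \(T\) is almost open and then apply an open mapping theorem of Banach--Schauder type. Since \(H\) is countable, \Cref{example} shows that \(X=V^H\) is a complete metrizable, linearly topologized \(\K\)-vector space, with neighborhood basis at \(0\) the subspaces \(U_F\), \(F\subseteq H\) finite. By \cite[Chapter~14]{NariciBeckenstein2011}, a continuous, almost open linear map from a complete metrizable topological vector space to a Hausdorff one is open. This reduces the statement to the following claim: for every finite \(F\subseteq H\), the closure \(\cl{T(U_F)}\) is a neighborhood of \(0\). Since \(\cl{T(U_F)}\) is a subspace, this is the same as saying it is open.

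\textbf{Step 1: \(T(U_F)\) has small codimension.} Fix a finite \(F\subseteq H\). The restriction map \(X\to V^F\) is surjective with kernel \(U_F\). Hence \(X=U_F\oplus W\) for a subspace \(W\cong V^F\) with \(\dim W=|F|\,\kappa\). Because \(T\) is surjective, \(X=T(U_F)+T(W)\), so the codimension of \(T(U_F)\), and a fortiori of \(D:=\cl{T(U_F)}\), is at most \(|F|\,\kappa\). If \(\kappa\) is finite, this is finite. If \(\kappa\) is infinite, it equals \(\kappa\). In both cases it is strictly less than \(|\K|^{\aleph_0}\), using the hypothesis \(\kappa<|\K|^{\aleph_0}\) and the fact that \(|\K|^{\aleph_0}\) is infinite.

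\textbf{Step 2: the quotient is discrete.} Consider \(Q:=X/D\) with the quotient topology. Because \(D\) is a closed subspace of a complete metrizable topological vector space, \(Q\) is Hausdorff and complete metrizable; this is standard, see \cite{NariciBeckenstein2011}. It is also linearly topologized, since the images of the open subspaces \(U_{F'}\) form a neighborhood basis of \(0\) consisting of subspaces. If \(Q\) were non-discrete, \Cref{lem:KN-embedding} would give \(\dim_\K Q\geq|\K|^{\aleph_0}\). But \(\dim_\K Q\) is the codimension of \(D\), which Step 1 bounds strictly below \(|\K|^{\aleph_0}\). Hence \(Q\) is discrete, so \(\{0\}\) is open in \(Q\), and its preimage \(D\) is open in \(X\). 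This proves that \(T\) is almost open.

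\textbf{Main obstacle.} The delicate points are the completeness of the quotient \(X/D\) and the exact form of the open mapping theorem invoked. If a clean reference is unavailable, I would argue directly in the linearly topologized setting, which is simpler than the classical case. Take the decreasing basis of open subspaces \(E_n=U_{F_n}\) with \(F_n\uparrow H\), and let \(y\in\cl{T(E_n)}\). Using almost openness at each stage, pick successively \(x_k\in E_{n+k}\) such that \(y-T(x_0+\cdots+x_k)\in \cl{T(E_{n+k+1})}\), and in fact lies in a prescribed basic neighborhood, so that these residuals tend to \(0\). The series \(\sum_k x_k\) converges by completeness to some \(x\in E_n\), since \(E_n\) is closed. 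Continuity of \(T\) then gives \(T(x)=y\). Thus \(\cl{T(E_n)}=T(E_n)\), so \(T(E_n)\) is a neighborhood of \(0\), and \(T\) is open.
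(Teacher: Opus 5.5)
Your proposal is correct and follows essentially the same route as the paper. Almost-openness comes from bounding the codimension of $\cl{T(U_F)}$ by $|F|\kappa<|\K|^{\aleph_0}$ and applying \Cref{lem:KN-embedding} to the complete metrizable, linearly topologized quotient $X/\cl{T(U_F)}$; the paper then proves openness directly rather than citing an open mapping theorem, running your fallback iteration with the residuals in the open subspaces $\cl{T(U_n)}\cap U_n$ so that they tend to $0$ and the corrections sum in $U_0$.
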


\begin{proof}
If \(H\) is finite, then \(X\) is discrete and there is nothing to prove.
Assume that \(H\) is infinite.

\begin{claim}\label{claim}
For every finite subset $F\subseteq H$, the subspace
$C_F:=\cl{T(U_F)}$ is open in $X$.
\end{claim}
\begin{proof}
The set $C_F$ is a closed subspace of $X$. By \Cref{example} and \cite[Theorem 3.7.4]{NariciBeckenstein2011}, the quotient \(X/C_F\) is a
complete metrizable linearly topologized \(\K\)-vector space. Since
$T(U_F)\subseteq C_F$, the surjectivity of \(T\) gives a well-defined
surjective linear map
\[
  X/U_F\longrightarrow X/C_F,
  \qquad x+U_F\longmapsto T(x)+C_F.
\]
Since \(X/U_F\cong V^F\), we obtain
\[
  \dim_{\K}(X/C_F)
  \leq\dim_{\K}(V^F)
  =|F|\cdot\kappa
  <|\K|^{\aleph_0}.
\]
By \Cref{lem:KN-embedding}, the quotient \(X/C_F\) must be discrete,
which is equivalent to $C_F$ being open in \(X\).
\end{proof}

Since every neighborhood of $0$ contains some $U_F$, Claim \ref{claim} shows that $T$ is almost open. It remains to prove that $T$ is open. Let \(O\) be a
neighborhood of $0$. Choose increasing finite subsets
\[
 F_0\subseteq F_1\subseteq\cdots\subseteq H,
 \qquad \bigcup_{n\geq0}F_n=H,
\]
such that $U_{F_0}\subseteq O$, and write $U_n:=U_{F_n}$. Then
$(U_n)_{n\geq0}$ is a decreasing neighborhood basis at $0$ consisting
of clopen subspaces. Put
\[
  W_n:=\cl{T(U_n)}\cap U_n \qquad(n\geq0).
\]
By Claim \ref{claim}, $(W_n)_{n\geq0}$ is also a decreasing neighborhood basis
at $0$ consisting of clopen subspaces: each $W_n$ is a neighborhood of
$0$, and $W_n\subseteq U_n$.

Fix \(r_0\in W_0\). We inductively construct $r_n\in W_n$ and
$u_n\in U_n$. Suppose that $r_n\in W_n$. Since $W_{n+1}$ is an open
subspace, $r_n-W_{n+1}=r_n+W_{n+1}$ is an open neighborhood of
$r_n$. Moreover, $r_n\in\cl{T(U_n)}$. Therefore this neighborhood
meets $T(U_n)$, so there exists $u_n\in U_n$ such that
$T(u_n)\in r_n-W_{n+1}$. Define
\[
  r_{n+1}:=r_n-T(u_n)\in W_{n+1}.
\]

Let $s_m:=\sum_{n=0}^m u_n$. These partial sums form a Cauchy
sequence. Indeed, given a neighborhood $U$ of $0$, choose $N$ such
that $U_N\subseteq U$. If $m>\ell\geq N$, then
\[
 s_m-s_\ell=\sum_{n=\ell+1}^m u_n
 \in U_{\ell+1}\subseteq U_N\subseteq U,
\]
because every summand belongs to the subspace $U_{\ell+1}$. Since $X$
is complete, $s_m$ converges to some $u\in X$. Every $s_m$ belongs to
the closed subspace $U_0$, and hence
\[
  u=\sum_{n\geq0}u_n\in U_0\subseteq O.
\]

The recursion gives
\(T(s_m)=r_0-r_{m+1}\). Furthermore, $r_{m+1}\to0$: for every
neighborhood $U$ of $0$, choose $N$ with $W_N\subseteq U$; then, for
$m+1\geq N$,
\(r_{m+1}\in W_{m+1}\subseteq W_N\subseteq U\). By continuity of $T$,
\[
  T(u)=\lim_{m\to\infty}T(s_m)
      =\lim_{m\to\infty}(r_0-r_{m+1})=r_0.
\]
Since $r_0\in W_0$ was arbitrary, $W_0\subseteq T(O)$. As $W_0$ is
a neighborhood of $0$, $T(O)$ is also a neighborhood of $0$.

It now follows that $T$ is open. Indeed, let $\mathcal O\subseteq X$ be
open and let $y\in\mathcal O$. Choose a neighborhood $O$ of $0$ such
that $y+O\subseteq\mathcal O$. Then
\[
  T(y)+T(O)=T(y+O)
\]
is a neighborhood of $T(y)$ contained in $T(\mathcal O)$. Thus every
point of $T(\mathcal O)$ is interior, and $T(\mathcal O)$ is open.
\end{proof}

\begin{theorem}\label{thm:negative}
Let \(G\) be any group and let \(V\) be a \(\K\)-vector space satisfying
\[
  \dim_{\K}V<|\K|^{\aleph_0}.
\]
Then every bijective linear cellular automaton \(\tau\colon V^G\to V^G\) is reversible.
\end{theorem}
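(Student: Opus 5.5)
We reduce to a countable subgroup and then apply \Cref{prop:open-mapping}. Let \(\tau\colon V^G\to V^G\) be a bijective linear cellular automaton with finite memory set \(M\subseteq G\). Let \(H\) be the subgroup of \(G\) generated by \(M\); it is countable, since it is finitely generated. The memory set is contained in \(H\), so we may consider the restriction \(\tau_H\colon V^H\to V^H\). By \cite[Proposition~1.7.4]{CSCAG2}, \(\tau_H\) is bijective. By \cite[Proposition~1.10.4]{CSCAG2}, \(\tau\) is reversible if and only if \(\tau_H\) is reversible. It therefore suffices to prove that \(\tau_H\) is reversible.

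Put \(X=V^H\) with the prodiscrete topology. By the linear Curtis--Hedlund--Lyndon theorem \cite[Theorem~8.1.2]{CSCAG2}, \(\tau_H\) is continuous, \(H\)-equivariant and \(\K\)-linear; it is also surjective. Since \(H\) is countable and \(\dim_\K V<|\K|^{\aleph_0}\), \Cref{prop:open-mapping} applies and shows that \(\tau_H\) is an open map. A bijective, continuous, open map is a homeomorphism. Hence \(\tau_H^{-1}\colon X\to X\) is continuous.

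The inverse \(\tau_H^{-1}\) is \(\K\)-linear, being the inverse of a linear bijection. It is also \(H\)-equivariant: from \(\tau_H(hx)=h\tau_H(x)\) we obtain \(\tau_H^{-1}(hy)=h\tau_H^{-1}(y)\) by applying \(\tau_H^{-1}\) to both sides with \(x=\tau_H^{-1}(y)\). The linear Curtis--Hedlund--Lyndon theorem now shows that \(\tau_H^{-1}\) is a linear cellular automaton. Therefore \(\tau_H\) is reversible, and so \(\tau\) is reversible.

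The only substantive step is the open mapping property, which \Cref{prop:open-mapping} already supplies; everything else is bookkeeping. One point needs care in that bookkeeping. \Cref{prop:open-mapping} requires the index set to be countable, and this is exactly why we pass to \(H\) rather than working with \(V^G\) directly, which need not be metrizable. We must also check that the cited restriction results hold for arbitrary alphabets, not only finite ones. The restriction results, which say that restriction preserves bijectivity and reversibility, are stated in \cite{CSCAG2} for arbitrary alphabets, so no finiteness assumption on \(V\) is needed.
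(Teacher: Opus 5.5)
Your proof is correct and follows the paper's own argument. You restrict to the countable subgroup $H=\langle M\rangle$ and apply \Cref{prop:open-mapping} to see that $\tau_H$ is open. Its inverse is therefore continuous, linear and $H$-equivariant, hence a cellular automaton by \cite[Theorem~8.1.2]{CSCAG2}, and reversibility transfers back to $\tau$ via \cite[Proposition~1.10.4]{CSCAG2}.
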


\begin{proof}
Choose a finite memory set \(M\) for \(\tau\) and let
\(H=\langle M\rangle\). The group \(H\) is countable. By
\cite[Proposition~1.7.4]{CSCAG2}, the restricted map
$\tau_H\colon V^H\to V^H$ is a continuous bijective linear map. By
\Cref{prop:open-mapping}, it is open, so $\tau_H^{-1}$ is continuous.
The inverse is also linear and $H$-equivariant. Hence
\cite[Theorem~8.1.2]{CSCAG2} implies that $\tau_H^{-1}$ is a cellular
automaton, and $\tau_H$ is reversible. Finally,
\cite[Proposition~1.10.4]{CSCAG2} implies that \(\tau\) is reversible.
\end{proof}

\begin{corollary}\label{cor:necessary}
Let \(G\) be a group, let \(\K\) be a field, and let \(V\) be a
\(\K\)-vector space. If \(G\) is locally finite or
\(\dim_{\K}V<|\K|^{\aleph_0}\), then every bijective linear cellular
automaton \(\tau\colon V^G\to V^G\) is reversible.
\end{corollary}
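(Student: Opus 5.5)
The corollary follows by splitting into the two cases of its hypothesis and invoking an earlier result in each.

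In the first case, \(\dim_{\K}V<|\K|^{\aleph_0}\). This is exactly the hypothesis of \Cref{thm:negative}, which applies to an arbitrary group \(G\). It therefore yields directly that every bijective linear cellular automaton \(\tau\colon V^G\to V^G\) is reversible. No further work is needed.

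In the second case, \(G\) is locally finite, and no assumption is made on \(\dim_{\K}V\). Here I would cite \cite[Proposition~6.1]{CSC2011}, recalled in the introduction: over a locally finite group, every bijective cellular automaton with an arbitrary alphabet is reversible. Taking the alphabet to be \(V\), a bijective linear \(\tau\) has an inverse that is a cellular automaton. For completeness, I would also sketch why this holds, using the restriction machinery of Section~2. Choose a finite memory set \(M\) and let \(H=\langle M\rangle\); since \(G\) is locally finite, \(H\) is finite. By \cite[Proposition~1.7.4]{CSCAG2}, \(\tau_H\colon V^H\to V^H\) is bijective. It is also trivially a cellular automaton with memory set \(H\), since every map \(V^H\to V^H\) that is \(H\)-equivariant is determined by its value at the identity coordinate, which depends only on the full configuration on the finite set \(H\). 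Thus \(\tau_H^{-1}\), being \(H\)-equivariant, is a cellular automaton on \(V^H\). Then \cite[Proposition~1.10.4]{CSCAG2} transfers reversibility from \(\tau_H\) back to \(\tau\). Linearity of the inverse is automatic, since the inverse of a bijective linear map is linear.

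The only point needing care is the claim that every \(H\)-equivariant self-map of \(A^H\) is a cellular automaton when \(H\) is finite. This holds because the prodiscrete topology on \(A^H\) is discrete, so uniform continuity is automatic and the Curtis--Hedlund--Lyndon theorem \cite[Theorem~1.9.1]{CSCAG2} applies. I do not expect any real obstacle; the corollary is essentially the combination of \Cref{thm:negative} with the known locally finite case.
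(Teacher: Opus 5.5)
Your proposal is correct and matches the paper's proof: the case \(\dim_{\K}V<|\K|^{\aleph_0}\) is exactly \Cref{thm:negative}, and the locally finite case is \cite[Proposition~6.1]{CSC2011}. Your additional sketch of the locally finite case is also sound, though the paper simply cites the result. In that sketch you restrict to the finite subgroup \(H=\langle M\rangle\), observe that any \(H\)-equivariant self-map of \(V^H\) is a cellular automaton with memory set \(H\), and then apply \cite[Proposition~1.10.4]{CSCAG2}.
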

\begin{proof}
If \(\dim_{\K}V<|\K|^{\aleph_0}\), the result follows from
\Cref{thm:negative}. If \(G\) is locally finite, it follows from
\cite[Proposition~6.1]{CSC2011}.
\end{proof}

\begin{remark}[A gap in an earlier construction]\label{rem:gap}
Theorem~1.2 of \cite{CSC2011} states that, for every non-periodic group
$G$ and every infinite-dimensional $\K$-vector space $V$, there exists a
bijective non-reversible linear cellular automaton $V^G\to V^G$. This is
incompatible with \Cref{thm:negative} when
$\dim_{\K}V<|\K|^{\aleph_0}$. We now explain the gap in the proof of the
earlier statement.

Let $(v_i)_{i\geq1}$ be a linearly independent sequence in $V$, let $E$
be its span, and, for $j\geq1$, put
\[
 E_j:=\left\langle v_i:
       \frac{(j-1)j}{2}+1\leq i\leq\frac{j(j+1)}{2}\right\rangle.
\]
Then $E=\bigoplus_{j\geq1}E_j$, with $\dim_{\K}E_j=j$, and one may
choose a complement $F$ such that $V=E\oplus F$. In the line preceding
Lemma~5.1 of \cite{CSC2011}, the proof uses the claimed identification
\[
 V^{\Z}=\left(\bigoplus_{j\geq1}E_j^{\Z}\right)\oplus F^{\Z}.
\]
This identification is not correct. Indeed, there is only a proper inclusion
\begin{equation}\label{eq:natural-inclusion}
  \bigoplus_{j\geq1}E_j^{\Z}
  \subsetneq
  \left(\bigoplus_{j\geq1}E_j\right)^{\Z},
\end{equation}
because the infinite direct sum in the left-hand-side may have at most a finite number of non-zero coordinates, which does not occur in the infinite direct product on the right-hand side. Consequently, the linear cellular automaton constructed in \cite[Sec. 5]{CSC2011} is not surjective. 
\end{remark}


\section{The converse of \texorpdfstring{\Cref{thm:main}}{the main theorem}}

Throughout this section, let $G$ be a non-locally finite group. Using
K\"onig's infinity lemma, Yang showed that there exist a finite symmetric
subset \(S\subseteq G\) and a sequence \((s_n)_{n\geq1}\) in \(S\) such
that
\[
  p_0=1_G,
  \qquad
  p_n=s_1s_2\cdots s_n\quad(n\geq1)
\]
are pairwise distinct; see the proof of
\cite[Lemma~2.4]{YangClosedImage2026}.

We first recall Yang's linear construction from
\cite[Sections~3--4]{YangClosedImage2026}. Consider the $\K$-vector space
\[
  E=\bigoplus_{n\geq0}\K e_n,
\]
which is isomorphic to $\K[t]$ via the linear map $e_n\mapsto t^n$. Let
\(R\colon E\to E\) be the right shift \(R(e_n)=e_{n+1}\), and let
\(P_0\colon E\to E\) be the projection onto \(\K e_0\). For
\(s\in S\), define the coordinate projection
\(P_s\colon E\to E\) by
\[
  P_s(e_n)=
  \begin{cases}
    e_n,& n\geq1\text{ and }s_n=s,\\
    0,&\text{otherwise}.
  \end{cases}
\]
Consider the linear cellular automaton
\[
  \tau_E\colon E^G\longrightarrow E^G
\]
given by
\begin{equation}\label{eq:Yang-automaton}
  \tau_E(x)(g)
  =
  P_0(x(g))-R(x(g))
  +\sum_{s\in S}P_s(x(gs)).
\end{equation}
Its coordinate form, established in
\cite[Lemma~3.2]{YangClosedImage2026}, is
\begin{equation}\label{eq:Yang-coordinate-form}
  (\tau_E(x))_0(g)=x_0(g),
  \qquad
  (\tau_E(x))_n(g)=x_n(gs_n)-x_{n-1}(g)
  \quad(n\geq1).
\end{equation}
Using these identities, \cite[Propositions~4.1 and~4.2]{YangClosedImage2026} show that the constant configuration $y\in E^G$ defined by $y(g)=e_0$ for every $g\in G$ belongs to the
closure of \(\tau_E(E^G)\), but not to \(\tau_E(E^G)\). The obstruction is
that any global preimage would have to take, at every group element, the
value
\[
  e_0+e_1+e_2+\cdots,
\]
which does not belong to the direct sum \(E\).

We now extend the above direct-sum alphabet $E$ to the direct product
\[   W=\prod_{n\geq0}\K e_n\cong\K^{\N}\cong\K[[t]]. \]
The new point is that $e_0+e_1+e_2+\cdots \in W$, which removes the obstruction for the surjectivity of the corresponding linear cellular automaton.  
\begin{theorem}\label{thm:positive}
Let \(G\) be a non-locally finite group. Then there exists a bijective non-reversible linear cellular automaton \(\tau\colon W^G\to W^G\).
\end{theorem}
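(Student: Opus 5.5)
We take \(\tau=\tau_W\colon W^G\to W^G\) to be the automaton given by the same formula \eqref{eq:Yang-automaton}, with \(P_0\), \(R\) and \(P_s\) now acting coordinatewise on \(W=\prod_{n\geq0}\K e_n\). The memory set is \(\{1_G\}\cup S\) and the local rule is linear, so \(\tau\) is a linear cellular automaton. The computation behind \eqref{eq:Yang-coordinate-form} is coordinatewise, so it carries over verbatim. Writing \(x_n(g)\in\K\) for the \(n\)-th coordinate of \(x(g)\in W\), we get
\[
(\tau(x))_0(g)=x_0(g),\qquad (\tau(x))_n(g)=x_n(gs_n)-x_{n-1}(g)\quad(n\geq1).
\]

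\textbf{Bijectivity.} Given \(y\in W^G\), we solve \(\tau(x)=y\) recursively in \(n\). First set \(x_0=y_0\). Then, since right multiplication by \(s_n\) is a bijection of \(G\), the equation forces
\[
x_n(g)=y_n(gs_n^{-1})+x_{n-1}(gs_n^{-1}).
\]
This determines each \(x_n\colon G\to\K\) uniquely. Because \(W\) is the full product, any family of coordinates \((x_n(g))_{n\geq0}\) defines an element \(x(g)\in W\), with no finiteness condition. Hence every \(y\) has exactly one preimage, and \(\tau\) is bijective. This is exactly the point where the direct-sum obstruction disappears.

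\textbf{Non-reversibility.} It suffices to show \(\tau^{-1}\) is not a cellular automaton. By the linear Curtis--Hedlund--Lyndon theorem it is enough to show it is not continuous for the prodiscrete topology. Unwinding the recursion gives
\[
x_n(g)=\sum_{k=0}^{n} y_k\bigl(g\,s_n^{-1}s_{n-1}^{-1}\cdots s_{k+1}^{-1}\bigr).
\]
In particular, \(x_n(1_G)\) involves \(y_0\) evaluated at \(p_n^{-1}\), since \(s_n^{-1}\cdots s_1^{-1}=p_n^{-1}\).

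Suppose \(\tau^{-1}\) were continuous at \(0\). Then, for the neighborhood \(U_{\{1_G\}}\) of \(0\), there would be a finite \(F\subseteq G\) with \(\tau^{-1}(U_F)\subseteq U_{\{1_G\}}\). The \(p_n\) are pairwise distinct, so the \(p_n^{-1}\) are too, and we can choose \(n\) with \(p_n^{-1}\notin F\). Let \(y\) have \(y_0(p_n^{-1})=1\) and all other coordinates zero, everywhere. Then \(y\in U_F\). On the other hand, taking \(g=1_G\) and \(k=0\) in the formula gives \(x_n(1_G)=y_0(p_n^{-1})=1\), while the other terms vanish. So \(x=\tau^{-1}(y)\notin U_{\{1_G\}}\), a contradiction. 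Equivalently, the memory of \(\tau^{-1}\) would have to contain infinitely many \(p_n^{-1}\).

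The main obstacle is the bookkeeping of the closed formula for \(x_n\) and the correct direction of the shifts \(p_n\) versus \(p_n^{-1}\). We would verify it by induction on \(n\), together with checking that only the \(k=0\) term contributes at \(1_G\) for the chosen \(y\). Alternatively, one can avoid the explicit formula and argue via the sequence \(y^{(m)}\) with \(y_0\) supported at \(p_m^{-1}\): this sequence tends to \(0\), whereas the preimages do not, since their \(m\)-th coordinate at \(1_G\) equals \(1\).
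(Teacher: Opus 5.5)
Your proof is correct and follows the paper's argument essentially step by step. It uses the same coefficientwise extension to $W$, the same recursion $x_n(g)=y_n(gs_n^{-1})+x_{n-1}(gs_n^{-1})$ for bijectivity, and the same test configurations supported at $p_n^{-1}$; your continuity-at-$0$ formulation is just the topological rephrasing of the paper's finite-memory contradiction. One small slip: unwinding the recursion actually gives $x_n(g)=y_0(g\,s_n^{-1}\cdots s_1^{-1})+\sum_{k=1}^{n}y_k(g\,s_n^{-1}\cdots s_k^{-1})$, so your $k\geq1$ arguments are off by one index, but this is harmless because those terms vanish for your chosen $y$ and only the (correct) $k=0$ term is used.
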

\begin{proof}
The endomorphisms \(R,P_0\), and \(P_s\) of $E$ have canonical
coefficientwise extensions to $W$. Explicitly, for a formal vector
$w=\sum_{n\geq0}a_ne_n\in W$, set
\[
 \begin{aligned}
 R(w)&=\sum_{n\geq0}a_ne_{n+1},
 &P_0(w)&=a_0e_0,\\
 P_s(w)&=\sum_{\substack{n\geq1\\s_n=s}}a_ne_n
 &&(s\in S).
 \end{aligned}
\]
These formulas define $\K$-linear endomorphisms of $W$.
Consequently,
\eqref{eq:Yang-automaton} defines a linear cellular automaton
\[
  \widehat{\tau}\colon W^G\longrightarrow W^G
\]
with memory set \(\{1_G\}\cup S\), and the identities \eqref{eq:Yang-coordinate-form} remain valid without change, with
$\tau_E$ replaced by $\widehat{\tau}$.

We claim that \(\widehat{\tau}\) is bijective. Given
\(y\in W^G\), define scalars \(x_n(g)\in\K\) recursively by
\begin{equation}\label{eq:completed-Yang-inverse}
  x_0(g)=y_0(g),
  \qquad
  x_n(g)=y_n(gs_n^{-1})+x_{n-1}(gs_n^{-1})
  \quad(n\geq1).
\end{equation}
For every \(g\in G\), the sequence \((x_n(g))_{n\geq0}\) defines an
element
\[
  x(g)=\sum_{n\geq0}x_n(g)e_n\in W.
\]

Indeed, substituting \eqref{eq:completed-Yang-inverse} into
\eqref{eq:Yang-coordinate-form} gives
\[
  x_n(gs_n)-x_{n-1}(g)
  =
  y_n(g)
\]
for every \(n\geq1\), while \(x_0(g)=y_0(g)\). Hence
\(\widehat{\tau}(x)=y\). Conversely, any preimage of $y$ must satisfy
\eqref{eq:completed-Yang-inverse}; induction on $n$ therefore determines
all its coefficients uniquely. Thus \(\widehat{\tau}\) is bijective.

It remains to prove that its inverse is not a cellular automaton.
Put
\[
  q_n=p_n^{-1}=s_n^{-1}s_{n-1}^{-1}\cdots s_1^{-1}.
\]
The elements \(q_n\) are pairwise distinct. For \(n\geq1\), let
\(z^{(n)}\in W^G\) be the configuration
\[
  z^{(n)}(q_n)=e_0,
  \qquad
  z^{(n)}(g)=0\quad(g\neq q_n).
\]
Write \(x=\widehat{\tau}^{-1}(z^{(n)})\). Since all positive
coordinates of \(z^{(n)}\) vanish, repeated application of
\eqref{eq:completed-Yang-inverse} gives
\[
  x_n(1_G)
  =
  x_{n-1}(s_n^{-1})
  =
  \cdots
  =
  x_0(s_n^{-1}\cdots s_1^{-1})
  =
  z^{(n)}_0(q_n)
  =
  1.
\]
Therefore
\[
  \widehat{\tau}^{-1}(z^{(n)})(1_G)\neq0.
\]

If \(\widehat{\tau}^{-1}\) had a finite memory set \(N\subseteq G\),
we could choose \(n\) such that \(q_n\notin N\), because the $q_n$ are
pairwise distinct. The configurations
\(z^{(n)}\) and \(0\) would then agree on \(N\), but
\[
  \widehat{\tau}^{-1}(z^{(n)})(1_G)\neq0
  =
  \widehat{\tau}^{-1}(0)(1_G),
\]
contradicting the memory property. Thus
\(\widehat{\tau}^{-1}\) is not a cellular automaton, and
\(\widehat{\tau}\) is not reversible.
\end{proof}

\begin{corollary}\label{cor:sufficient}
Let \(G\) be a non-locally finite group and let \(V\) be a \(\K\)-vector space satisfying
\[   \dim_{\K}V\geq|\K|^{\aleph_0}. \]
Then there exists a bijective non-reversible linear cellular automaton
\(\tau\colon V^G\to V^G\).
\end{corollary}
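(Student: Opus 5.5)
The plan is to reduce to \Cref{thm:positive} by splitting off a copy of $W$ from $V$. Since $\dim_{\K}V\geq|\K|^{\aleph_0}=\dim_{\K}W$, there is an injective linear map $W\to V$. Fix a subspace $W'\subseteq V$ with $W'\cong W$ and a complement $F$, so that $V=W'\oplus F$. For each $v\in V$ write $v=\pi_W(v)+\pi_F(v)$, where $\pi_W,\pi_F$ are the two projections. Identify $W'$ with $W$. Unlike the infinite direct sum in \Cref{rem:gap}, a finite direct sum commutes with products:
\[
  V^G=(W\oplus F)^G\cong W^G\oplus F^G,
\]
coordinatewise, because any $x\in V^G$ splits uniquely as $x=\pi_W\circ x+\pi_F\circ x$.

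Let $\widehat{\tau}\colon W^G\to W^G$ be the bijective non-reversible linear cellular automaton of \Cref{thm:positive}, with memory set $M=\{1_G\}\cup S$ and linear local rule $\hat\mu\colon W^M\to W$. Define $\tau\colon V^G\to V^G$ by
\[
  \tau(x)=\widehat{\tau}(\pi_W\circ x)+\pi_F\circ x,
\]
that is, $\tau=\widehat{\tau}\oplus\id_{F^G}$. This is a linear cellular automaton with memory set $M$. Its local rule is $\mu(u)=\hat\mu(\pi_W\circ u)+\pi_F(u(1_G))$, which is linear and valued in $W\oplus F=V$. It is $G$-equivariant, since the projections act coordinatewise. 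It is bijective, with inverse $\widehat{\tau}^{-1}\oplus\id_{F^G}$.

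It remains to show that $\tau$ is not reversible. Suppose $\tau^{-1}$ were a cellular automaton with memory set $N$ and local rule $\nu$. Let $\iota\colon W^G\to V^G$ be the inclusion and $p\colon V^G\to W^G$ the projection $x\mapsto\pi_W\circ x$. Then $\widehat{\tau}^{-1}=p\circ\tau^{-1}\circ\iota$. Since $\iota$ and $p$ act coordinatewise, this composition is a cellular automaton with memory set $N$ and local rule $\pi_W\circ\nu\circ\iota_N$. This contradicts \Cref{thm:positive}.

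Alternatively, one can reuse the explicit witnesses $z^{(n)}$ from the proof of \Cref{thm:positive}, embedded into $V^G$ via $\iota$, and rerun the memory-set argument directly. Neither step should be a real obstacle. The only point needing care is the identification $V^G\cong W^G\oplus F^G$, which is valid here precisely because the decomposition has finitely many summands.
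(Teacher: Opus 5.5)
Your proposal is correct and follows essentially the same route as the paper: split $V=W\oplus Z$ using $\dim_{\K}W=|\K|^{\aleph_0}$ (which the paper justifies via the Erd\H{o}s--Kaplansky theorem), take $\tau=\widehat{\tau}\times\id_{Z^G}$, and derive a finite-memory rule for $\widehat{\tau}^{-1}$ from one for $\tau^{-1}$ by restricting inputs to $W^G$ and projecting outputs onto $W^G$. Your explicit local rules and the identity $\widehat{\tau}^{-1}=p\circ\tau^{-1}\circ\iota$ simply spell out details the paper leaves implicit.
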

\begin{proof}
By the Erd\H{o}s--Kaplansky theorem (see \cite[Theorem~IX.5.2]{Jacobson1975}), $\dim_{\K}W=\dim_{\K}(\K^{\N})=|\K|^{\aleph_0}$, which allow us to choose a subspace of $V$ isomorphic to $W \cong \K^{\N}$. After identifying this subspace with $W$, choose a complement $Z$ of $W$, so that
\[   V=W\oplus Z. \]
Under the induced identification $V^G\cong W^G\oplus Z^G$, define the
bijective linear cellular automaton
\[
  \tau=\widehat{\tau}\times\id_{Z^G},
\]
where $\widehat{\tau}\colon W^G\to W^G$ is the automaton constructed in
\Cref{thm:positive}. If \(\tau^{-1}\) had a finite-memory local rule,
then restricting its input to $W^G\times\{0\}$ and projecting its output
onto $W^G$ would give a finite-memory local rule for
$\widehat{\tau}^{-1}$. This contradicts \Cref{thm:positive}. Hence
\(\tau\) is bijective but not reversible.
\end{proof}

\section*{Declaration on the use of artificial intelligence}

During the preparation of this work, the authors used OpenAI's ChatGPT 5.6 Sol
to assist with mathematical exploration and exposition. The authors
reviewed and edited the content and take full responsibility for the
manuscript.


\end{document}